\definecolor{chianti}{rgb}{0.6,0,0}
\definecolor{meretale}{rgb}{0,0,.6}
\definecolor{leaf}{rgb}{0,.35,0}
\newtheorem{theorem}{Theorem}[section]
\newtheorem{lemma}[theorem]{Lemma}
\theoremstyle{definition}
\newtheorem{example}[theorem]{Example}
\newtheorem{remark}[theorem]{Remark}
\newtheorem{question}[theorem]{Question}
\numberwithin{equation}{theorem}
\newcommand{\del}{\partial}
\newcommand{\Ann}{\operatorname{Ann}}
\newcommand{\Hom}{\operatorname{Hom}}
\newcommand{\Ext}{\operatorname{Ext}}
\newcommand{\Proj}{\operatorname{Proj}}
\renewcommand{\ge}{\geqslant}
\renewcommand{\le}{\leqslant}
\renewcommand{\tilde}{\widetilde}
\renewcommand{\to}{\longrightarrow}
\renewcommand{\mod}{\,\operatorname{mod}\,}
\newcommand{\frakm}{\mathfrak{m}}
\newcommand{\FF}{\mathbb{F}}
\newcommand{\PP}{\mathbb{P}}
\newcommand{\calC}{{\mathcal{C}}}
\newcommand{\calH}{{\mathcal{H}}}
\newcommand{\calM}{{\mathcal{M}}}
\newcommand{\calO}{{\mathcal{O}}}
\newcommand{\calX}{{\mathcal{X}}}
\begin{document}
\title{Frobenius on the cohomology of thickenings}

\author[Bhatt]{Bhargav Bhatt}
\address{IAS/Princeton University and University of Michigan}
\email{bhargav.bhatt@gmail.com}

\author[Blickle]{Manuel Blickle}
\address{Johannes Gutenberg-Universit\"at Mainz}
\email{blicklem@uni-mainz.de}

\author[Lyubeznik]{Gennady Lyubeznik}
\address{University of Minnesota}
\email{gennady@math.umn.edu}

\author[Singh]{Anurag K. Singh}
\address{University of Utah}
\email{singh@math.utah.edu}

\author[Zhang]{Wenliang Zhang}
\address{University of Illinois at Chicago}
\email{wlzhang@uic.edu}

\thanks{B.B.~was supported by NSF grants DMS~1801689, DMS~1952399, and DMS~1840234, the Packard Foundation, and the Simons Foundation~622511; M.B.~by DFG grant SFB/TRR45 and CRC326 GAUS; G.L.~by NSF grant DMS~1800355, A.K.S.~by NSF grants DMS~1801285 and DMS~2101671; and W.Z.~by NSF grant DMS~1752081. The authors are also grateful to the American Institute of Mathematics (AIM) and the Institute for Advanced Study (IAS) for supporting their collaboration. We thank Johan de Jong and Mircea Musta\c t\u a for encouragement and conversations.}

\begin{abstract}
We investigate the injectivity of the Frobenius map on thickenings of smooth varieties in projective space over a field of positive characteristic. We obtain uniform bounds---i.e., independent of the characteristic---on the thickening that ensures an injective Frobenius map when the projective variety is a smooth complete intersection or an arbitrary projective embedding of an elliptic curve. Our bounds are sharp in the case of hypersurfaces, and in the case of elliptic curves.
\end{abstract}
\maketitle

\section{Introduction}

Let $X$ be a closed subscheme of $\PP^n$ defined by an ideal $I$ of $S\colonequals\FF[x_0,\dots,x_n]$, where~$\FF$ is a field of characteristic $p>0$. We use $X_t$ to denote the \emph{$t$-th thickening} of $X$, i.e., the subscheme defined by~$I^t$. Suppose $\FF$ has characteristic $p\ge t$, consider the composition
\[
\CD
S/I @>>> S/I^{[p]} @>>> S/I^t,
\endCD
\]
where $I^{[p]}$ is the ideal generated by $p$-th powers of generators of $I$, the first map is induced by the Frobenius endomorphism of $S$, and the second is the canonical surjection. For $k$ an integer, consider the induced map on cohomology groups
\[
\tilde{F}_t\colon H^k(X,\calO_X) \to H^k(X_t,\calO_{X_t}).
\]
This paper is motivated by the following question:

\begin{question}
\label{question:main}
Let $X$ be a smooth subvariety of $\PP^n$, over a field of characteristic $p>0$. Does there exist an integer $t\le p$, depending only on $\dim X$, such that for each $k$, the map~$\tilde{F}_t$ as above is injective?
\end{question}

We prove that the integer $t=\dim X+1$ suffices in two cases:

\begin{theorem}
\label{theorem:main}
Let $X$ be a smooth subvariety of $\PP^n$, over a field of characteristic $p>0$. Suppose that either
\begin{enumerate}[\quad\rm(1)]
\item\label{theorem:main:elliptic} $X$ is an elliptic curve, or
\item\label{theorem:main:hypersurface} $X$ is a hypersurface, and $p\ge n$.
\end{enumerate}
Then the map 
\[
\tilde{F}_t\colon H^{\dim X}(X,\calO_X) \to H^{\dim X}(X_t,\calO_{X_t})
\]
is injective when $t=\dim X+1$.
\end{theorem}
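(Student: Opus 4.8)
The plan is to recast injectivity of $\tilde F_t$ as surjectivity of a Cartier-type operator via graded local duality, and then to deduce that surjectivity from smoothness of $X$; the exponent $t=\dim X+1$ enters as exactly the power that makes the computation close, which also accounts for sharpness.

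\smallskip\noindent\textbf{Reduction to local cohomology and duality.} Set $R=S/I$ and $\frakm=(x_0,\dots,x_n)$, so $\dim R=\dim X+1$. For $k\ge1$ one has $H^k(X,\calO_X)\cong[H^{k+1}_{\frakm}(R)]_0$ and $H^k(X_t,\calO_{X_t})\cong[H^{k+1}_{\frakm}(S/I^t)]_0$, and under these identifications $\tilde F_t$ is the degree-$0$ component of the map on top local cohomology $H^{\dim R}_{\frakm}(S/I)\to H^{\dim R}_{\frakm}(S/I^t)$ induced by $S/I\xrightarrow{\,F\,}S/I^{[p]}\to S/I^t$. Dualizing via graded Matlis duality and graded local duality over $S$ — using $\Ext^\bullet_S(-,\omega_S)$, so that no Cohen--Macaulayness is needed — turns $\tilde F_t$ into (the degree-$0$ part of) a trace map $\omega_{S/I^t}\to\omega_R$ adjoint to Frobenius, and $\tilde F_t$ is injective if and only if this trace map is surjective in degree $0$.

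\smallskip\noindent\textbf{The hypersurface case.} Here $R=S/(f)$ with $\deg f=d$ (and $n\ge2$); we may assume $d\ge n+1$, else $H^{n-1}(X,\calO_X)=0$. From $0\to S(-jd)\xrightarrow{f^j}S\to S/(f^j)\to0$ and $H^i_{\frakm}(S)=0$ for $i\le n$ one identifies $H^n_{\frakm}(S/(f^j))$ with $\ker(f^j\colon E_S(-jd)\to E_S)$, where $E_S\colonequals H^{n+1}_{\frakm}(S)$, after which a short diagram chase identifies $\tilde F_t$ with the map $\eta\mapsto f^{\,p-t}\eta^{[p]}$. Matlis-dualizing ($E_S^\vee\cong\omega_S=S(-n-1)$) replaces Frobenius on $E_S$ by the Cartier operator $C$ on $S$ — with $C(x^a)=x^{(a-(p-1)\mathbf1)/p}$ if $a\equiv-\mathbf1\bmod p$, and $0$ otherwise — and gives
\[
\tilde F_t\text{ injective}\ \Longleftrightarrow\ C\bigl(f^{\,p-t}\cdot S_{td-n-1}\bigr)=S_{d-n-1}.
\]
For $t=n$, which makes sense precisely because $p\ge n$, this reads $C\bigl(f^{\,p-n}S_{nd-n-1}\bigr)=S_{d-n-1}$, and it remains to prove it for every smooth $f$. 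The input is the Jacobian criterion: $\partial_0f,\dots,\partial_nf$ have no common zero in $\PP^n$, so they form a homogeneous system of parameters and $A\colonequals S/(\partial_0f,\dots,\partial_nf)$ is Artinian Gorenstein, with socle in degree $(n+1)(d-2)$. The main lever I expect to use is the identity $C\circ\partial_i=0$: for any $h$,
\[
0=C\bigl(\partial_i(f^{\,p-n+1}h)\bigr)=(p-n+1)\,C\bigl(f^{\,p-n}(\partial_if)h\bigr)+C\bigl(f^{\,p-n+1}\partial_ih\bigr),
\]
where $p-n+1$ is a unit since $p\ge n\ge2$; this trades a factor $\partial_if$ for an extra $f$ at the cost of a derivative. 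Iterating this, together with Euler's relation $f\in(\partial_0f,\dots,\partial_nf)$ (valid for $p\nmid d$; the finitely many exceptional pairs $(p,d)$ are treated by hand) and the Gorenstein pairing on $A$, should exhibit a spanning set of $S_{d-n-1}$ inside $C(f^{\,p-n}S_{nd-n-1})$. Carrying this through — and in particular seeing that exactly $n$ factors, no fewer, are needed — is the main obstacle, and it is where smoothness and the sharp value of $t$ come in.

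\smallskip\noindent\textbf{The elliptic curve case.} Now $\dim X=1$, $t=2$, and $H^1(X,\calO_X)$ is one-dimensional, so injectivity means $\tilde F_2\ne0$. The composite $H^1(X,\calO_X)\xrightarrow{\tilde F_2}H^1(X_2,\calO_{X_2})\to H^1(X,\calO_X)$, coming from $S/I\to S/I^{[p]}\to S/I^2\to S/I$, is the Frobenius on $H^1(X,\calO_X)$, i.e.\ the Hasse invariant; so if $X$ is ordinary we are done (then even $t=1$ works). If $X$ is supersingular the Hasse invariant vanishes, so by $0\to\calI/\calI^2\to\calO_{X_2}\to\calO_X\to0$ and $H^2$-vanishing on a curve, $\tilde F_2$ factors through $H^1(X,\calI/\calI^2)=H^1(X,N^\vee_{X/\PP^n})$ (which injects into $H^1(X_2,\calO_{X_2})$, since $H^0(\calO_{X_2})\to H^0(\calO_X)$ is the isomorphism on constants); one must show the resulting second-order Frobenius class is nonzero. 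I would compute it, using $\omega_X\cong\calO_X$ and the fact that in the square-zero thickening the $p$-th power of a local function is independent of the chosen lift, so the class is canonical. For a plane cubic model, where $I=(f)$ and the analysis above applies with $n=t=2$, the claim becomes $C(f^{\,p-2}h)\ne0$ with $h$ a generator of the one-dimensional socle of $S/(\partial_0f,\partial_1f,\partial_2f)$ (equivalently, the Hessian of $f$), which one reads off from smoothness. A general projective embedding should be reduced to a plane cubic model by linear projection, comparing the two second thickenings; making this reduction work uniformly in $p$, and covering the small primes, is the main obstacle on the elliptic side.
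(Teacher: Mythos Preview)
Your hypersurface setup is sound and dual to the paper's: identifying $\tilde F_t$ on $[H^n_\frakm(S/fS)]_0$ with $\eta\mapsto f^{p-t}F(\eta)$ on the $f$-annihilator in $[H^{n+1}_\frakm(S)]_{-d}$ is exactly Lemma~2.3, and your Cartier reformulation is its Matlis dual. But the mechanism you propose---iterate $C\circ\partial_i=0$, invoke Euler's relation, and use the Gorenstein pairing on $S/(\partial_0f,\dots,\partial_nf)$---is left incomplete and is not how the paper proceeds. The paper stays on the primal side and uses a \emph{minimality trick}: assuming $f^{p-n}s^p\in\frakm^{[q]}$ with $s^p\notin\frakm^{[q]}$, take the least $k$ with $f^ks^p\in\frakm^{[q]}$; a \emph{single} application of the $\partial_i$ (no iteration) gives $(J+fS)f^{k-1}s^p\subseteq\frakm^{[q]}$, and since $\frakm^{n(d-2)+d}\subseteq J+fS$ by Lemma~2.1, the colon computation $\frakm^{[q]}:\frakm^N=\frakm^{(n+1)q-n-N}+\frakm^{[q]}$ of Lemma~2.2 forces a degree inequality contradicting $k\le p-n$. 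Note the paper only uses that $J+fS$ is $\frakm$-primary, never Euler's relation, so there is no $p\mid d$ exception; your route genuinely breaks there, and ``treated by hand'' is not a proof (nor are such pairs finite in any meaningful sense).

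The elliptic curve part has a real gap. Your plan---reduce a general embedding $X\subset\PP^n$ to a plane cubic by ``linear projection, comparing the two second thickenings''---comes with no mechanism: the projection $\PP^n\dashrightarrow\PP^2$ is only rational, and there is no map between the square-zero thickenings in the two ambient spaces along which injectivity of $\tilde F_2$ could be transported. The paper's argument is entirely different and supplies the missing global input. It calls a closed immersion $(X\subset P)$ \emph{good} if the relevant Frobenius pullback is injective, observes that goodness descends along maps of ambient spaces, and then invokes \emph{Igusa's theorem} on the reducedness of the supersingular locus in $\calM_{1,1}$ to show $(X\subset 2\calC_x)$ is good, where $\calC\to\calM_{1,1}$ is the universal curve. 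It then proves the forgetful map $\pi\colon\calH\to\calM_{1,1}$ from the relevant Hilbert scheme is smooth at $[X\subset\PP^n]$---via vanishing of $H^1(X,T_{\PP^n}|_X)$ and $H^1(X,(I_X/I_X^2)^\vee)$, both coming from the Euler sequence and Riemann--Roch---so that goodness passes from $(X\subset 2\calC_x)$ to $(X\subset 2\calX_y)$ and descends to $(X\subset\PP^n)$. Your conormal factorization and second-order Frobenius class are correct observations, but without Igusa's reducedness (or an equivalent input) you have no way to conclude nonvanishing in the supersingular case for an arbitrary embedding.
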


To give this some context, suppose $X$ is an elliptic curve over a field of positive characteristic. Then the Frobenius map
\[
\tilde{F}_1\colon H^1(X,\calO_X) \to H^1(X,\calO_X)
\]
is injective if and only if the elliptic curve is ordinary; in contrast, Theorem~\ref{theorem:main}.\ref{theorem:main:elliptic} says that~$\tilde{F}_2$ is injective independent of whether $X$ is ordinary or supersingular. When $X$ is an elliptic curve in $\PP^2$, this was proved earlier as~\cite[Theorem~4.1]{BS}, while Theorem~\ref{theorem:main}.\ref{theorem:main:hypersurface} extends the results of \cite{BS} from the Calabi-Yau case to that of all smooth hypersurfaces.

In the case of a hypersurface of characteristic $p$ in $\PP^n$, it is easy to see that the map~$\tilde{F}_p$ is injective, see Remark~\ref{remark:pth:injective}; what is striking in Theorem~\ref{theorem:main}.\ref{theorem:main:hypersurface} is that the~$n$-th thickening suffices independent of the characteristic. Moreover, the integer $n$ here is sharp: for each~$n\ge 2$ and each $d\ge n+1$, we construct a hypersurface $X$ in $\PP^n$, of degree $d$, for which
\[
\tilde{F}_{n-1}\colon H^{n-1}(X,\calO_X) \to H^{n-1}(X_{n-1},\calO_{X_{n-1}})
\]
is not injective, see Example~\ref{example:sharp}. One cannot expect uniform injectivity results for positive twists of the structure sheaf, see Example~\ref{example:positive:twist}, or without some version of the smoothness hypotheses, see Example~\ref{example:singular}.

Another affirmative answer to Question~\ref{question:main} is when $X$ is a complete intersection in $\PP^n$:

\begin{theorem}
\label{theorem:frobenius:ci:intro}
Let $X$ be a smooth complete intersection in $\PP^n$, over a field of characteristic~$p>0$. Then there exists an integer $t$, depending only on $n$ and on the degrees of the minimal defining equations, such that the map
\[
\tilde{F}_t\colon H^{\dim X}(X,\calO_X) \to H^{\dim X}(X_t,\calO_{X_t})
\]
is injective provided $p\ge t$.
\end{theorem}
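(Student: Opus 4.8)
The plan is to recast the statement in terms of graded local cohomology, to use the defining property of a complete intersection---freeness of the conormal module---to control the thickenings $X_t$, and then to reduce to a statement about the single smooth variety $X$ of the type established for hypersurfaces in Theorem~\ref{theorem:main}.\ref{theorem:main:hypersurface}. Write $R = S/I$, let $c$ be the codimension of $X$, and set $\delta = \dim X + 1 = \dim R = n+1-c$. Then $H^{\dim X}(X,\calO_X) = [H^\delta_{\frakm}(R)]_0$ and $H^{\dim X}(X_t,\calO_{X_t}) = [H^\delta_{\frakm}(S/I^t)]_0$, and $\tilde F_t$ is the degree-zero component of the map $H^\delta_{\frakm}(R) \to H^\delta_{\frakm}(S/I^t)$ induced by $R \to S/I^{[p]} \to S/I^t$. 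Since $S$ is regular, the Frobenius $S \to S$ is faithfully flat by Kunz's theorem, and flat base change identifies $H^\delta_{\frakm}(S/I^{[p]})$ with $S \otimes_{F,S} H^\delta_{\frakm}(R)$ and the first map with $\eta \mapsto 1 \otimes \eta$; in particular it is \emph{injective}. Thus $\ker \tilde F_t$ is the preimage, under this injection, of $\ker\!\big(H^\delta_{\frakm}(S/I^{[p]}) \to H^\delta_{\frakm}(S/I^t)\big)$.

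Because $I$ is generated by a regular sequence, $\operatorname{gr}_I S \cong R[T_1,\dots,T_c]$ and hence $\operatorname{gr}_I(S/I^{[p]}) \cong R[T_1,\dots,T_c]/(T_1^p,\dots,T_c^p)$; consequently $S/I^t$, $S/I^{[p]}$, and $I^t/I^{[p]}$ each carry a finite filtration whose subquotients are direct sums of twists of $R$, and so are Cohen--Macaulay of dimension $\delta$. Applying $H^\delta_{\frakm}$ to $0 \to I^t/I^{[p]} \to S/I^{[p]} \to S/I^t \to 0$ therefore produces an exact sequence
\[
0 \to H^\delta_{\frakm}(I^t/I^{[p]}) \to H^\delta_{\frakm}(S/I^{[p]}) \to H^\delta_{\frakm}(S/I^t) \to 0,
\]
which exhibits $H^\delta_{\frakm}(I^t/I^{[p]})$ as the $t$-th step of the $I$-adic filtration on $H^\delta_{\frakm}(S/I^{[p]})$, with associated graded $\bigoplus_{|\alpha| \ge t,\ \alpha_i < p} H^\delta_{\frakm}(R)(-\alpha\cdot\mathbf d)$ for $\mathbf d = (d_1,\dots,d_c)$. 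So $\tilde F_t$ is injective precisely when the copy of $[H^\delta_{\frakm}(R)]_0$ produced by Frobenius meets this submodule only in~$0$, and the aim is to show this happens once $t$ is at least a bound $N(n,\mathbf d)$ depending only on $n$ and the $d_i$.

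To handle this I would pass to the dual picture via graded local duality, where the question becomes that a map on spaces of global sections of twisted dualizing sheaves on $X$---assembled from the conormal filtration of $X_t$ together with the Cartier operator of $X$---is surjective onto $H^0(X,\omega_X)$. This is exactly the kind of statement handled for a single hypersurface in the proof of Theorem~\ref{theorem:main}.\ref{theorem:main:hypersurface}, and smoothness of $X$ is what makes it accessible: the Cartier operator $C\colon F_*\omega_X \to \omega_X$ is surjective as a map of coherent sheaves, so after twisting by a sufficiently positive power of $\calO_X(1)$ and applying Serre vanishing to the kernel of $C$, the corresponding maps on $H^0$ become surjective. Choosing $t$ large enough that every conormal twist $-\alpha\cdot\mathbf d$ reached by the Frobenius image lies past this Serre-vanishing threshold then forces the intersection above to vanish; since that threshold is bounded in terms of the regularity of sheaves on $X$, it---and hence $t$---may be taken to depend only on $n$ and the $d_i$. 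That some smoothness hypothesis is genuinely needed is shown by Example~\ref{example:singular}.

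The main obstacle is this last step, and within it the requirement that the bound on $t$ not grow with~$p$. The difficulty is that Frobenius does not respect the $I$-adic filtration, so the image of $[H^\delta_{\frakm}(R)]_0$ inside $H^\delta_{\frakm}(S/I^{[p]})$ is spread across many filtration levels; one must show that this spreading is confined to a range governed by $n$ and the $d_i$ alone---reflecting that each $f_i$ has bounded degree---so that a single $t$ works uniformly for all $p \ge t$. Reconciling this uniform control of the Frobenius image with the smoothness input above, rather than any isolated vanishing statement, is where the substance of the argument lies.
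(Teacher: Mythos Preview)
Your proposal is an honest outline rather than a proof, and you say as much in the final paragraph. The preliminary reductions are fine: Kunz gives injectivity of $R\to S/I^{[p]}$ on $H^\delta_\frakm$, the $I$-adic filtration on $S/I^{[p]}$ has the associated graded you describe, and the long exact sequence collapses by Cohen--Macaulayness. But the argument stops precisely at the substantive step.

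Two specific issues with the sketched completion. First, the Cartier route does not obviously give a $p$-independent bound: the kernel of $C\colon F_*\omega_X\to\omega_X$ is a sheaf that varies with $p$, and you offer no reason its Castelnuovo--Mumford regularity is bounded in terms of $n$ and the $d_i$ alone; without that, your Serre-vanishing threshold $m_0$ could grow with $p$. Second, even granting a uniform twisted Cartier surjectivity $H^0(X,\omega_X(pm))\twoheadrightarrow H^0(X,\omega_X(m))$ for $m\ge m_0$, you have not made precise how this implies the needed surjectivity onto $H^0(X,\omega_X)$ of the dual of $\tilde F_t$. The Frobenius image in $H^\delta_\frakm(S/I^{[p]})$ is, as you note, not compatible with the $I$-adic filtration, so knowing something about each graded piece does not automatically control the image; the passage from ``each twist past $m_0$ behaves well'' to ``$\tilde F_t$ is injective for $t\ge N(n,\mathbf d)$'' is exactly the missing link.

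The paper's argument is entirely different and avoids duality and filtrations. Via Lemma~\ref{lemma:diagram} it reduces to showing that $(f_1\cdots f_c)^{p-t}F$ is injective on the relevant piece of $H^{n+1}_\frakm(S)$, i.e., that one cannot have $(f_1\cdots f_c)^{p-t}s^p\in\frakm^{[q]}$ with $s^p\notin\frakm^{[q]}$. The decisive idea, absent from your sketch, is to apply the $S^p$-linear derivations $\del/\del x_i$ directly to such an ideal membership. Choosing a minimal exponent tuple $(k_1,\dots,k_c)$ with $f_1^{k_1}\cdots f_c^{k_c}s^p\in\frakm^{[q]}$ and differentiating produces, after a linear-algebra manipulation with the classical adjoint, a containment $J\,f_1^{k_1-1}f_2^{k_2}\cdots f_c^{k_c}s^p\subseteq\frakm^{[q]}$ involving the Jacobian ideal $J$. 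Smoothness makes $J+(f_1,\dots,f_c)$ primary to $\frakm$, which upgrades this to $\frakm^N f_1^{k_1-1}\cdots f_c^{k_c}s^p\subseteq\frakm^{[q]}$ for an explicit $N=N(n,\mathbf d,k_2,\dots,k_c)$; Lemma~\ref{lemma:colon} then converts this into a degree inequality that contradicts $k_1\le p-t$ once $t d_1\ge (n+1-c)(d-c)+1$. Thus the uniform bound falls out of an elementary degree count, and the passage from the bracket-power thickening $(f_1^{t_0},\dots,f_c^{t_0})$ to the ordinary power $I^t$ is the trivial containment $I^{c(t_0-1)+1}\subseteq(f_1^{t_0},\dots,f_c^{t_0})$. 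This is both more elementary and more explicit than what your outline is reaching for.
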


While a bound on $t$ in the theorem above may indeed be obtained using Theorem~\ref{theorem:ci}, we have not attempted in the present paper to optimize this bound.

We briefly explain the genesis of Question~\ref{question:main}; it arose organically from certain calculations in the case of Calabi-Yau hypersurfaces. More precisely, the injectivity of the Frobenius map on thickenings is closely related to the $F$-pure thresholds of Musta\c t\u a, Takagi, and Watanabe~\cite{TW, MTW}, that are invariants of singularities in positive characteristic analogous to characteristic zero log canonical thresholds; for instance, for a supersingular elliptic curve~$X$ of characteristic $p$ in $\PP^2$, the injectivity of $\tilde{F}_2\colon H^1(X,\calO_X) \to H^1(X_2,\calO_{X_2})$ yields that the $F$-pure threshold of the curve is $1-1/p$, see~\cite[Remark~2.2]{BS}. Analogous assertions hold for all Calabi-Yau hypersurfaces $X$ in $\PP^n$ provided $p$ is sufficiently large compared to~$n$, see \cite[Theorem~4.1.4 and Lemma~4.5]{BS}. Given that the injectivity of the Frobenius map on thickenings is the key cohomological input in these calculations, it is then natural to formulate Question~\ref{question:main} for arbitrary smooth varieties $X$ in $\PP^n$.

\begin{remark}
The integer $t$ in Question~\ref{question:main} is, in the case of Calabi-Yau hypersurfaces, closely related to the order of vanishing of the Hasse invariant at $[X]$ on the family of all such hypersurfaces, see~\cite[Lemma~4.5]{BS}; this was investigated in depth by Ogus~\cite{Ogus}.
\end{remark}

\begin{remark}
Though we do not pursue it here, it would be interesting to understand the role of projective space in Question~\ref{question:main}; are there other natural smooth varieties that one may use instead? We thank Mircea Musta\c t\u a for highlighting this question. 
\end{remark}

\section{Preliminaries}

Let $S\colonequals \FF[x_0,\dots,x_n]$ be a polynomial ring over a field $\FF$ of characteristic~$p>0$, and let~$\frakm$ denote its homogeneous maximal ideal. For integer powers $q$ of $p$, set
\[
\frakm^{[q]}\colonequals(x_0^q,\dots,x_n^q)S.
\]
Ring elements and ideals considered in this paper are homogeneous under the standard grading on $S$. By the \emph{Jacobian ideal} of a polynomial~$f$, we mean the ideal $J$ generated by the partial derivatives
\[
f_{x_i}\colonequals \del\!f/\del x_i\quad\text{ for }\ 0\le i\le n.
\]
The ideal $J+fS$ is $\frakm$-primary when $\Proj S/fS$ is smooth.

More generally, if $f_1,\dots,f_c$ is a regular sequence of homogeneous forms in $S$, let~$J$ denote the ideal generated by the size $c$ minors of the matrix
\[
\begin{pmatrix}
\frac{\del\!f_1}{\del x_0} & \dots & \frac{\del\!f_c}{\del x_0}\\
\vdots & & \vdots\\
\frac{\del\!f_1}{\del x_n} & \dots & \frac{\del\!f_c}{\del x_n}
\end{pmatrix}.
\]
The condition that $\Proj S/(f_1,\dots,f_c)S$ is smooth implies that $J+(f_1,\dots,f_c)S$ is $\frakm$-primary.

\begin{lemma}
\label{lemma:hilb}
Let $f$ be a homogeneous polynomial in $S\colonequals\FF[x_0,\dots,x_n]$ such that $\Proj S/fS$ is smooth. Set $d\colonequals\deg f$. Then
\[
\frakm^{n(d-2)+d}\ \subseteq\ J+fS\,.
\]
\end{lemma}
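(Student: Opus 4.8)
The plan is to produce, inside $J+fS$, an $\frakm$-primary complete intersection ideal whose top nonvanishing graded piece — equivalently, whose socle degree — can be read off from a Hilbert series, and then invoke the elementary fact that in an Artinian graded complete intersection $A$ over $\FF$ with socle in degree $N$ one has $A_{N+1}=0$, so that $\frakm^{N+1}$ maps into the defining ideal. (One may assume $d\ge 2$; if $d=1$ then $J=S$ and the assertion is vacuous.)

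To construct such a complete intersection I would first reduce to the case of an infinite field: the containment $\frakm^{n(d-2)+d}\subseteq J+fS$ is equivalent to surjectivity of the graded map $S(-d)\oplus S(-(d-1))^{n+1}\to S$ with components $f$ and the $f_{x_i}$ in the single degree $n(d-2)+d$, and this is unaffected by the faithfully flat extension $S\to S\otimes_\FF\FF'$ along any field extension $\FF'/\FF$. Assuming then that $\FF$ is infinite, set $R\colonequals S/fS$; since $f$ is a nonzerodivisor, $R$ is Cohen--Macaulay of dimension $n$, and the smoothness of $\Proj S/fS$ makes the image $\bar J$ of $J$ an $\frakm$-primary ideal of $R$, hence of height $n=\dim R$. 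A routine prime-avoidance argument now yields homogeneous $\FF$-linear combinations $g_1,\dots,g_n$ of $f_{x_0},\dots,f_{x_n}$, each of degree $d-1$, whose images form a system of parameters of $R$: inductively, if $g_1,\dots,g_j$ have been chosen with $(g_1,\dots,g_j)R$ of height $j<n$, then $\bar J$ lies in none of the finitely many minimal primes of $(g_1,\dots,g_j)R$ — each such prime has height $j<n=\operatorname{ht}\bar J$ — so the finite-dimensional $\FF$-subspace of $R_{d-1}$ spanned by the images of the $f_{x_i}$ lies in none of them, hence (over an infinite field) not in their union, and any $g_{j+1}$ in that span avoiding all of them works. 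As $R$ is Cohen--Macaulay this system of parameters is a regular sequence, so $f,g_1,\dots,g_n$ is a regular sequence in $S$, and plainly $(f,g_1,\dots,g_n)\subseteq J+fS$.

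Finally, $A\colonequals S/(f,g_1,\dots,g_n)$ is an Artinian graded complete intersection defined by forms of degrees $d,d-1,\dots,d-1$ (one of degree $d$ and $n$ of degree $d-1$), so its Hilbert series is
\[
\frac{(1-T^{d})(1-T^{d-1})^{n}}{(1-T)^{n+1}}=(1+T+\dots+T^{d-1})(1+T+\dots+T^{d-2})^{n},
\]
a polynomial of degree $(d-1)+n(d-2)$. Hence $A_m=0$ for every $m>(d-1)+n(d-2)$, that is, $\frakm^{\,n(d-2)+d}\subseteq(f,g_1,\dots,g_n)\subseteq J+fS$, as desired. The one substantive point is the choice of the $g_i$ — i.e., that the partials genuinely cut $R$ down to dimension zero step by step — and this is precisely where the $\frakm$-primality of $J+fS$, hence the smoothness hypothesis, is used; the reduction to an infinite field and the Hilbert-series bookkeeping are formal.
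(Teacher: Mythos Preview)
Your argument is correct and follows essentially the same route as the paper: reduce to an infinite field, use homogeneous prime avoidance to extend $f$ by $n$ degree-$(d-1)$ elements of $J$ to a homogeneous system of parameters for $S$, and read off the socle degree of the resulting Artinian complete intersection. Your write-up is more detailed---you carry out the prime avoidance in $R=S/fS$ and display the Hilbert series explicitly---but the underlying idea is identical.
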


\begin{proof}
The assertion is a statement regarding the Hilbert-Poincar\'e series of $S/(J+fS)$, and this is unaffected by enlarging $\FF$ to an infinite field, so as to use homogeneous prime avoidance as follows: the ideal $J+fS$ is $\frakm$-primary, and $J$ is generated in degree $d-1$, so~$f$ can be extended to a homogeneous system of parameters for $S$, where the parameters have degrees~$d,d-1,\dots,d-1$. The socle modulo these elements is spanned by an element of degree $d-1+n(d-2)$.
\end{proof}

The following is essentially \cite[Lemma~3.2]{BS}; a proof is sketched for convenience.

\begin{lemma}
\label{lemma:colon}
Let $S\colonequals\FF[x_0,\dots,x_n]$ be a polynomial ring over a field of characteristic $p>0$. Then, for each $q\colonequals p^e$ and each $N\ge0$, one has
\[
\frakm^{[q]}:_S\frakm^N\ =\ \frakm^{(n+1)q-n-N}+\frakm^{[q]},
\]
with the convention that $\frakm^i\colonequals S$ for $i\le0$.
\end{lemma}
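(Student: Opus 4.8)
The plan is to exploit that $\frakm^{[q]}$, $\frakm^N$, and hence the colon ideal $\frakm^{[q]}:_S\frakm^N$ are all monomial ideals, so that it suffices to identify which monomials lie in each side. Write $a\colonequals(n+1)(q-1)$; this is the top nonvanishing degree of the Artinian ring $A\colonequals S/\frakm^{[q]}$, whose socle is spanned by the image of $x_0^{q-1}\cdots x_n^{q-1}$. Since $(n+1)q-n-N=a-N+1$, the asserted equality says precisely that, modulo $\frakm^{[q]}$, the colon ideal is spanned by the monomials of degree $\ge a-N+1$. First I would dispose of $N=0$, where $\frakm^0=S$ and both sides equal $\frakm^{[q]}$ (using the pigeonhole fact $\frakm^{a+1}\subseteq\frakm^{[q]}$). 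For $N\ge1$, the ideal $\frakm^N$ is generated by the monomials of degree exactly $N$, so a monomial $m$ lies in $\frakm^{[q]}:_S\frakm^N$ if and only if $mm'\in\frakm^{[q]}$ for every monomial $m'$ of degree $N$.

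The inclusion $\frakm^{(n+1)q-n-N}+\frakm^{[q]}\subseteq\frakm^{[q]}:_S\frakm^N$ is then a degree count: $\frakm^{[q]}$ is visibly in the colon, and if $m$ is a monomial with $\deg m\ge a-N+1$ then $mm'$ has degree $>a$ for every $m'$ of degree $N$, whence $mm'\in\frakm^{[q]}$ since $A$ vanishes above degree $a$.

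For the reverse inclusion, which is the real content, I would take a monomial $m=x_0^{b_0}\cdots x_n^{b_n}\notin\frakm^{[q]}$, so that $0\le b_i\le q-1$ for all $i$, and show that $\deg m\le a-N$ forces $m\notin\frakm^{[q]}:_S\frakm^N$. The key step is to exhibit a witness: since the total unused capacity $\sum_i(q-1-b_i)=a-\deg m$ is at least $N$, one can pick integers $b'_i$ with $0\le b'_i\le q-1-b_i$ and $\sum_i b'_i=N$ (fill the slots greedily), and then $mm'=\prod_i x_i^{b_i+b'_i}$ has every exponent $\le q-1$, so $mm'\notin\frakm^{[q]}$ although $m'\in\frakm^N$. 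By contraposition, every monomial of $\frakm^{[q]}:_S\frakm^N$ either lies in $\frakm^{[q]}$ or has degree $\ge a-N+1=(n+1)q-n-N$, which is the desired containment; the case $N\ge a+1$ is consistent with the stated convention, since then $\frakm^N\subseteq\frakm^{[q]}$ and both sides equal $S$ (the exponent $(n+1)q-n-N\le0$, so $\frakm^{(n+1)q-n-N}=S$ by convention).

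I do not anticipate a genuine obstacle: the only care needed is in the reduction to monomials and in tracking the convention $\frakm^i=S$ for $i\le0$ through the extreme values of $N$. Conceptually this is nothing more than the Gorenstein-duality statement that in the graded Artinian Gorenstein ring $A=S/\frakm^{[q]}$ of socle degree $a$ one has $0:_A\frakm^N=A_{\ge a-N+1}$, the perfect pairing $A_j\times A_{a-j}\to A_a$ being exactly what rules out low-degree elements in the colon; the monomial argument above is simply an explicit form of that duality, and it makes no use of $q$ being a power of $p$.
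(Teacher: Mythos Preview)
Your proof is correct, and the combinatorics are handled cleanly, including the edge cases $N=0$ and $N\ge(n+1)(q-1)+1$.

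Your route, however, differs from the paper's. The paper treats the easy inclusion by the same pigeonhole observation $\frakm^{(n+1)q-n}\subseteq\frakm^{[q]}$, but for the reverse inclusion it works in local cohomology rather than with monomials: a homogeneous $s\in\frakm^{[q]}:_S\frakm^N$ determines the class $\eta=[s/(x_0^q\cdots x_n^q)]\in H^{n+1}_\frakm(S)$, which is annihilated by $\frakm^N$; since the submodule of $H^{n+1}_\frakm(S)$ killed by $\frakm^N$ sits in degrees $\ge -n-N$, either $\eta=0$ (so $s\in\frakm^{[q]}$) or $\deg s-(n+1)q\ge -n-N$, which is exactly the degree bound. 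Your explicit witness construction and the paper's local-cohomology degree estimate are really two faces of the same Gorenstein duality for $S/\frakm^{[q]}$, as you note at the end; your version is more elementary and entirely self-contained (and, as you observe, does not use that $q$ is a prime power), while the paper's version is quicker if one already has the structure of $H^{n+1}_\frakm(S)$ at hand and meshes with how the lemma is applied later in the paper.
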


\begin{proof}
The pigeonhole principle gives
\[
\frakm^{(n+1)q-n}\subseteq\frakm^{[q]},
\]
which explains one inclusion. For the other, if $s$ is a homogeneous element of $\frakm^{[q]}:_S\frakm^N$, then $\frakm^N$ annihilates the element
\[
\left[\frac{s}{x_0^q\cdots x_n^q}\right]
\]
of the local cohomology module $H^{n+1}_\frakm(S)$. If this element is nonzero, then it has degree at least $-n-N$.
\end{proof}

\begin{lemma}
\label{lemma:diagram}
Let $f\in S\colonequals\FF[x_0,\dots,x_n]$ be a homogeneous form, where $\FF$ is a field of characteristic $p$. Set $d\colonequals\deg f$, and let $t$ be an integer with $1\le t\le p$. Then there exists a commutative diagram
\[
\CD
H^n_\frakm(S/fS) @>\cong>> \Ann\big(f,\ H^{n+1}_\frakm(S)(-d)\big)\phantom{,}\\
@VV\tilde{F_t}V @VVf^{p-t}FV\\
H^n_\frakm(S/f^tS) @>\cong>> \Ann\big(f^t,\ H^{n+1}_\frakm(S)(-dt)\big),
\endCD
\]
where, in the vertical map on the right,
\[
F\colon H^{n+1}_\frakm(S)\to H^{n+1}_\frakm(S)
\]
is the map induced by the Frobenius endomorphism of $S$.

More generally, suppose $f_1,\dots,f_c$ is a regular sequence of homogeneous forms in $S$. Setting~$d\colonequals\sum\deg f_i$, one has a commutative diagram
\[
\CD
H^{n+1-c}_\frakm(S/(f_1,\dots,f_c)S) @>\cong>> \Ann\big((f_1,\dots,f_c),\ H^{n+1}_\frakm(S)(-d)\big)\\
@VV\tilde{F}_{[t]}V @VV(f_1\cdots f_c)^{p-t}FV\\
H^{n+1-c}_\frakm(S/(f_1^t,\dots,f_c^t)S) @>\cong>> \Ann\big((f_1^t,\dots,f_c^t),\ H^{n+1}_\frakm(S)(-dt)\big),
\endCD
\]
where $\tilde{F}_{[t]}$ is the map of local cohomology modules induced by
\[
\CD
S/(f_1,\dots,f_c) @>>> S/(f_1^p,\dots,f_c^p) @>>> S/(f_1^t,\dots,f_c^t),
\endCD
\]
with the first map induced by Frobenius, and the second being the canonical surjection.
\end{lemma}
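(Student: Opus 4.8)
The plan is to produce the two isomorphisms and then check that the square commutes by chasing Čech representatives. I begin with the hypersurface case. The key observation is that $H^n_\frakm(S/fS)$ can be computed from the short exact sequence $0 \to S(-d) \xrightarrow{f} S \to S/fS \to 0$: since $H^n_\frakm(S) = 0 = H^{n+1}_\frakm(S/fS)$ (as $S$ has depth $n+1$ and $S/fS$ has dimension $n$), the long exact sequence in local cohomology gives $H^n_\frakm(S/fS) \cong \ker\bigl(H^{n+1}_\frakm(S)(-d) \xrightarrow{f} H^{n+1}_\frakm(S)\bigr) = \Ann(f,\ H^{n+1}_\frakm(S)(-d))$. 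Likewise, using $0 \to S(-dt) \xrightarrow{f^t} S \to S/f^tS \to 0$, one gets $H^n_\frakm(S/f^tS) \cong \Ann(f^t,\ H^{n+1}_\frakm(S)(-dt))$. These are the horizontal maps.

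For the vertical maps: the map $\tilde F_t$ is induced by the ring composition $S/fS \to S/f^{[p]}S \to S/f^tS$, i.e.\ $S/fS \xrightarrow{F} S/f^p S \xrightarrow{\cdot f^{p-t}} S/f^tS$ (the second arrow is well-defined since $f^{p-t}\cdot f^tS \subseteq f^p S$ once $t \le p$, and it realizes the canonical surjection up to the twist). Tracking a class through the connecting homomorphisms, an element of $H^n_\frakm(S/fS)$ corresponding to $\xi \in H^{n+1}_\frakm(S)(-d)$ with $f\xi = 0$ lifts to a Čech cochain $\eta$ over $S$ with $f\eta$ a coboundary; applying the snake/connecting maps for the two resolutions, one sees that $\tilde F_t(\xi)$ corresponds to $f^{p-t} F(\xi) \in H^{n+1}_\frakm(S)(-dt)$, where $F$ is the Frobenius-induced map (which raises the $S(-d)$-twist to $S(-dp)$, and then $f^{p-t}$ brings it to $S(-dt)$). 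One checks $f^t \cdot f^{p-t}F(\xi) = f^p F(\xi) = F(f\xi) = 0$, so the target class indeed lies in the annihilator of $f^t$, and $F$-linearity over $S$ makes the diagram commute. I expect the bookkeeping of the grading twists and the verification that the induced map on the subquotients is exactly $f^{p-t}F$ — rather than some Frobenius-twisted variant — to be the only genuinely delicate point.

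For the complete-intersection generalization, I would replace the two-term Koszul resolution by the full Koszul complexes $K_\bullet(f_1,\dots,f_c)$ and $K_\bullet(f_1^t,\dots,f_c^t)$ on $S$, which resolve $S/(f_1,\dots,f_c)$ and $S/(f_1^t,\dots,f_c^t)$ since the $f_i$ (and hence the $f_i^t$) form regular sequences. Since $H^j_\frakm(S) = 0$ for $j \le n$, the only surviving local cohomology of $S$ sits in degree $n+1$, and a spectral-sequence (or iterated connecting-map) argument identifies $H^{n+1-c}_\frakm(S/(f_1,\dots,f_c)S)$ with the top homology of the Koszul complex $K_\bullet(f_1,\dots,f_c) \otimes H^{n+1}_\frakm(S)$, which for a regular sequence acting by zero on a module is just $\Ann((f_1,\dots,f_c),\ H^{n+1}_\frakm(S))$ in the appropriate twist $(-d)$, $d = \sum \deg f_i$. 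The map of complexes induced by $F$ on $S$ followed by multiplication by $(f_1\cdots f_c)^{p-t}$ induces $\tilde F_{[t]}$, exactly as above; commutativity again follows from $F$-semilinearity of Frobenius together with the identity $(f_1^t,\dots,f_c^t)\cdot(f_1\cdots f_c)^{p-t} \subseteq (f_1^p,\dots,f_c^p)$, which is what makes the composite $S/(f_1,\dots,f_c) \to S/(f_1^p,\dots,f_c^p) \to S/(f_1^t,\dots,f_c^t)$ well-defined and matches the multiplication-by-$(f_1\cdots f_c)^{p-t}$ description on the Koszul side. The main obstacle here is purely organizational: setting up the identification of $H^{n+1-c}_\frakm$ of the quotient with the Koszul homology of $H^{n+1}_\frakm(S)$ in a way that is manifestly compatible with the two vertical maps, so that no hidden sign or twist discrepancy creeps in.
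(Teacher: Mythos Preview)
Your approach is essentially the paper's: lift $\tilde F_t$ to a map of the short exact sequences $0\to S(-d)\xrightarrow{f}S\to S/fS\to 0$ and $0\to S(-dt)\xrightarrow{f^t}S\to S/f^tS\to 0$, with $F$ in the middle and $f^{p-t}F$ on the left, then apply $H^\bullet_\frakm$. The paper records this as one commutative diagram of exact sequences rather than a \v Cech chase, but the content is identical.

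One correction: your factorization ``$S/fS \xrightarrow{F} S/f^p S \xrightarrow{\cdot f^{p-t}} S/f^tS$'' is wrong as written. The second arrow is the \emph{canonical surjection} (well-defined because $f^pS\subseteq f^tS$), not multiplication by $f^{p-t}$; on the quotient side $\tilde F_t$ simply sends $s+fS\mapsto s^p+f^tS$. The factor $f^{p-t}$ appears only on the $S(-d)$-side of the resolution, where it is forced by the commutativity $f^t\cdot(f^{p-t}F)=F\cdot f$. Your later \v Cech chase reaches the right conclusion, so this is a slip in exposition rather than a gap in the argument, but it is worth fixing since the confused sentence is exactly the ``delicate point'' you flag.

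For the complete-intersection case, the paper just says ``a similar inductive proof'' (peel off one $f_i$ at a time, using the hypersurface case at each step), whereas you propose the full Koszul resolution in one shot. Both work; the inductive route avoids setting up the spectral sequence and the sign bookkeeping you worry about, while your Koszul argument is more conceptual and makes the identification with $\Ann\big((f_1,\dots,f_c),H^{n+1}_\frakm(S)(-d)\big)$ transparent in one step.
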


\begin{proof}
For the first assertion, note that the commutative diagram
\[
\CD
0 @>>> S(-d) @>f>>S @>>> S/fS @>>> 0\\
@. @VVf^{p-t}FV @VVFV @VV\tilde{F_t}V\\
0 @>>> S(-dt) @>f^t>>S @>>> S/f^tS @>>> 0
\endCD
\]
induces a commutative diagram of local cohomology modules
\begin{equation}
\label{equation:cd}
\CD
0 @>>> H^n_\frakm(S/fS) @>>> H^{n+1}_\frakm(S)(-d) @>f>> H^{n+1}_\frakm(S) @>>>0\\
@. @VV\tilde{F_t}V @VVf^{p-t}FV @VVFV\\
0 @>>>H^n_\frakm(S/f^tS) @>>>H^{n+1}_\frakm(S)(-dt) @>f^t>>H^{n+1}_\frakm(S) @>>>0
\endCD
\end{equation}
where the rows are exact. The second assertion has a similar inductive proof.
\end{proof}

\begin{remark}
\label{remark:pth:injective}
It is immediate from the above that the map
\[
\tilde{F_{\!p}}\colon H^n_\frakm(S/fS) \to H^n_\frakm(S/f^pS)
\]
is injective: the Frobenius action on $H^{n+1}_\frakm(S)$ is injective.
\end{remark}

\section{Hypersurfaces}

We begin with the proof of Theorem~\ref{theorem:main}.\ref{theorem:main:hypersurface}, followed by examples illustrating that this result is sharp in multiple ways.

\begin{proof}[Proof of Theorem~\ref{theorem:main}.\ref{theorem:main:hypersurface}]
Let $f\in S\colonequals\FF[x_0,\dots,x_n]$ be a homogeneous form defining the hypersurface $X$. Set $d\colonequals\deg f$. The map
\[
\tilde{F}_t\colon H^{n-1}(X,\calO_X) \to H^{n-1}(X_t,\calO_{X_t})
\]
is precisely the map
\[
\tilde{F}_t\colon {[H^n_\frakm(S/fS)]}_0 \to {[H^n_\frakm(S/f^tS)]}_0,
\]
so taking $t=n$ in Lemma~\ref{lemma:diagram}, it suffices to prove that
\[
f^{p-n}F\colon {[H^{n+1}_\frakm(S)]}_{-d} \to {[H^{n+1}_\frakm(S)]}_{-dn}
\]
is injective. Computing local cohomology via the \v Cech complex on the elements $x_0,\dots,x_n$, a nonzero element of ${[H^{n+1}_\frakm(S)]}_{-d}$ may be expressed as
\[
\eta\colonequals\left[\frac{s}{(x_0\cdots x_n)^{q/p}}\right]
\]
for some integer power $q$ of the characteristic $p$, where $s$ is a homogeneous element of $S$ with degree $-d+(n+1)q/p$. Suppose $f^{p-n}F(\eta)=0$. Then
\[
f^{p-n}s^p\ \in\ \frakm^{[q]},
\]
whereas the assumption $\eta\neq 0$ implies that $s^p\notin\frakm^{[q]}$. Take $k$ to be the smallest integer with
\begin{equation}
\label{equation:minimal:k}
f^ks^p\ \in\ \frakm^{[q]}.
\end{equation}
Applying the $S^p$-linear derivations $\del/\del x_i$ to the above, one obtains
\[
kf_{x_i}f^{k-1}s^p\ \in\ \frakm^{[q]}\qquad\text{ for }0\le i\le n,
\]
where $f_{x_i}\colonequals \del\!f/\del x_i$. Since $1\le k\le p-n$, the image of $k$ in $\FF$ is nonzero, so
\[
Jf^{k-1}s^p\ \subseteq\ \frakm^{[q]},
\]
where $J\colonequals(f_{x_0},\dots,f_{x_n})S$ is the Jacobian ideal of $f$. It follows that
\[
(J+fS) f^{k-1}s^p\ \subseteq\ \frakm^{[q]}.
\]
Combining this with Lemma~\ref{lemma:hilb}, one obtains
\[
\frakm^{n(d-2)+d}f^{k-1}s^p\ \subseteq\ \frakm^{[q]},
\]
so Lemma~\ref{lemma:colon} gives
\[
f^{k-1}s^p\ \in\ \frakm^{(n+1)(q-d)+n}+\frakm^{[q]}.
\]
But $f^{k-1}s^p\notin\frakm^{[q]}$ by the minimality of $k$ in~\eqref{equation:minimal:k}, so the polynomial $f^{k-1}s^p$ must have degree at least $(n+1)(q-d)+n$, i.e.,
\[
(k-1)d-pd+(n+1)q\ \ge\ (n+1)(q-d)+n,
\]
which yields a contradiction since $k\le p-n$.
\end{proof}

The following example illustrates that the $n$-th thickening in Theorem~\ref{theorem:main}.\ref{theorem:main:hypersurface} is optimal:

\begin{example}
\label{example:sharp}
Fix $n\ge 2$ and $d\ge n+1$, and consider the hypersurface $X$ defined by
\[
f\colonequals x_0^d+\dots+x_n^d
\]
in $S\colonequals\FF[x_0,\dots,x_n]$, where $\FF$ is a field of characteristic $p\equiv -1\mod d$. We claim that
\[
\tilde{F}_{n-1}\colon H^{n-1}(X,\calO_X) \to H^{n-1}(X_{n-1},\calO_{X_{n-1}})
\]
is not injective. View $H^{n-1}(X,\calO_X)$ as ${[H^n_\frakm(S/fS)]}_0$, with the latter computed via the \v Cech complex on $x_0,\dots,x_n$. The hypothesis $d\ge n+1$ ensures that
\[
\eta\colonequals\left[\frac{x_0^n}{x_1\cdots x_n}\right]
\]
is a nonzero element ${[H^n_\frakm(S/fS)]}$. We claim that
\[
\tilde{F}_{n-1}(\eta)\ \in\ {[H^n_\frakm(S/f^{n-1}S)]}_0
\]
is zero. For this, it suffices to verify that
\[
x_0^{np}\ \in\ (x_1^p,\dots,x_n^p,\ f^{n-1})S.
\]
Let $p=kd-1$, for $k$ an integer. Then $np\ge (nk-1)d$, so it suffices to verify that
\[
x_0^{(nk-1)d}\ \in\ (x_1^{kd},\dots,x_n^{kd},\ f^{n-1})S.
\]
Setting $y_i\colonequals x_i^d$ for each $i$, one has $f=y_0+\dots+y_n$, so the required verification now is
\[
y_0^{nk-1}\ \in\ \big(y_1^k,\dots,y_n^k,\ (y_0+\dots+y_n)^{n-1}\big)S.
\]
Working modulo $(y_0+\dots+y_n)^{n-1}$, the element $y_0^{nk-1}$ is congruent to an element in
\[
(y_1,\dots,y_n)^{nk-n+1},
\]
which is contained in $(y_1^k,\dots,y_n^k)$, settling the claim.
\end{example}

\begin{remark}
Regarding negative twists of the structure sheaf, an injectivity result for the Frobenius action is provided by~\cite[Theorem~3.5]{BS}: Let $X$ be a smooth hypersurface of degree~$d$ in $\PP^n$, over a field of characteristic $p\ge\min\{d+1,\ nd-d-n\}$. Then
\[
\tilde{F}_1\colon H^{n-1}(X,\calO_X(-k)) \to H^{n-1}(X,\calO_X(-pk))
\]
is injective for each $k\ge 1$; it is worth emphasizing that no thickening is needed in this case.
\end{remark}

In view of Serre vanishing, one cannot expect similar uniform results when dealing with positive twists of the structure sheaf:

\begin{example}
\label{example:positive:twist}
Let $X$ be a smooth quartic hypersurface in $\PP^2$, and consider the map
\begin{equation}
\label{equation:positive:twist:1}
\tilde{F}_t\colon H^1(X,\calO_X(1)) \to H^1(X_t,\calO_{X_t}(p)).
\end{equation}
If this map is injective, then $H^1(X_t,\calO_{X_t}(p))$ is nonzero, so $p\le 4t-3$, i.e.,~$t\ge(p+3)/4$. It follows that there is no uniform $t$, i.e., independent of $p$, for which~\eqref{equation:positive:twist:1} is injective.

As such, the injectivity of the map~\eqref{equation:positive:twist:1} is equivalent to that of
\begin{equation}
\label{equation:positive:twist:2}
f^{p-t}F\colon {[H^3_\frakm(S)]}_{-3} \to {[H^3_\frakm(S)]}_{p-4t}
\end{equation}
by Lemma~\ref{lemma:diagram}. If this map is \emph{not} injective, then $f^{p-t}\in\frakm^{[p]}$; applying differential operators and imitating the proof of Theorem~\ref{theorem:main}.\ref{theorem:main:hypersurface}, one obtains $t\le (p+6)/4$. Thus,~\eqref{equation:positive:twist:2} is injective for thickenings $X_t$ with $t>(p+6)/4$.

For an explicit example, consider the hypersurface $X$ defined by $f=x_0^4+x_1^4+x_2^4$. We claim that the least $t$ such that the map~\eqref{equation:positive:twist:2} is injective is
\[
t=\begin{cases}
\frac{p+3}{4} & \text{ if } p\equiv 1\mod 4,\\
\frac{p+9}{4} & \text{ if } p\equiv 3\mod 4.
\end{cases}
\]

Suppose $p=4k+1$, it suffices to check that $f^{p-(p+3)/4}=f^{3k}\notin\frakm^{[p]}$, which holds since the monomial $x_0^{4k}x_1^{4k}x_2^{4k}$ occurs in $f^{3k}$ with a nonzero coefficient. If $p=4k+3$, one has
\[
f^{p-(p+5)/4}=f^{3k+1}\in (x_0^{4k+4},\ x_1^{4k+4},\ x_2^{4k+4})\subseteq\frakm^{[p]},
\]
so~\eqref{equation:positive:twist:2} in not injective with $t=(p+5)/4$. However,~\eqref{equation:positive:twist:2} is injective for $t=(p+9)/4$ by the bound recorded previously.
\end{example}

\begin{example}
\label{example:singular}
Consider $X$ in $\PP^2$ defined by $x_0^3-x_1^2x_2$. This hypersurface is not smooth and, indeed, the least $t$ with 
\[
\tilde{F}_t\colon H^1(X,\calO_X) \to H^1(X_t,\calO_{X_t})
\]
injective increases with the characteristic $p$ as follows:
\[
t=\begin{cases}
\frac{p+5}{6} & \text{ if } p\equiv 1\mod 6,\\
\frac{p+7}{6} & \text{ if } p\equiv 5\mod 6.
\end{cases}
\]
These are straightforward calculations using binomial expansions, and are omitted.
\end{example}

\section{Complete intersections}

The proof of Theorem~\ref{theorem:frobenius:ci:intro} relies on the following:

\begin{theorem}
\label{theorem:ci}
Let $S\colonequals\FF[x_0,\dots,x_n]$ be a polynomial ring over a field $\FF$ of positive characteristic $p$. Let $f_1,\dots,f_c$ be a regular sequence of homogeneous forms in $S$, defining a smooth projective variety $\Proj S/(f_1,\dots,f_c)S$. Set $d_i\colonequals\deg f_i$, and $d\colonequals\sum d_i$.

If $t$ is an integer with $t\le p$, and
\[
td_i\ \ge\ (n+1-c)(d-c)+1
\]
for each $1\le i\le c$, then the map
\[
\tilde{F}_{[t]}\colon {[H^{n+1-c}_\frakm(S/(f_1,\dots,f_c)S)]}_0\to {[H^{n+1-c}_\frakm(S/(f_1^t,\dots,f_c^t)S)]}_0,
\]
as defined in Lemma~\ref{lemma:diagram}, is injective. 
\end{theorem}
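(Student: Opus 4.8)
The plan is to mimic the proof of Theorem~\ref{theorem:main}.\ref{theorem:main:hypersurface}, replacing the single exponent $k$ there by a carefully chosen multidegree. Since injectivity of $\tilde{F}_{[t]}$ may be checked after enlarging $\FF$, I would assume $\FF$ is infinite. By the second diagram in Lemma~\ref{lemma:diagram}, $\tilde{F}_{[t]}$ is identified, in degree~$0$, with the map $(f_1\cdots f_c)^{p-t}F$ on ${[\Ann((f_1,\dots,f_c),H^{n+1}_\frakm(S)(-d))]}_0$; the case $t=p$ is the Frobenius action, which is injective, so I would assume $t<p$. Supposing for contradiction that $\eta\neq0$ lies in this space with $(f_1\cdots f_c)^{p-t}F(\eta)=0$, I would compute $H^{n+1}_\frakm(S)$ by the \v Cech complex on $x_0,\dots,x_n$ and write $F(\eta)=[\,s^p/(x_0\cdots x_n)^q\,]$ for a large power $q$ of $p$ and a homogeneous $s$ with $\deg s=(n+1)q/p-d$. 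Then $\eta\neq0$ gives $s^p\notin\frakm^{[q]}$ (since $F$ is injective on $H^{n+1}_\frakm(S)$), the condition $f_i\eta=0$ gives $f_i^ps^p\in\frakm^{[q]}$ for every $i$ (apply $F$), and the hypothesis on $\eta$ gives $(f_1\cdots f_c)^{p-t}s^p\in\frakm^{[q]}$.

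Next I would introduce $A=\{\mathbf a\in\mathbb{Z}_{\ge0}^c:\ f_1^{a_1}\cdots f_c^{a_c}s^p\in\frakm^{[q]}\}$, which is closed upward, contains $(p-t,\dots,p-t)$ and each $p\,\mathbf e_i$, and excludes $(0,\dots,0)$; I would pick $\mathbf k=(k_1,\dots,k_c)$ minimal in $A$ with $\mathbf k\le(p-t,\dots,p-t)$ componentwise, so that $1\le k_l\le p-t$ on the nonempty set $I_+=\{l:k_l\ge1\}$, and fix $i\in I_+$. Applying the derivations $\partial/\partial x_j$ to $\mathbf u\colonequals\prod_{l\in I_+}f_l^{k_l}s^p\in\frakm^{[q]}$ — which they preserve, while annihilating the $p$-th power $s^p$ — gives
\[
\sum_{l\in I_+}k_l\,\frac{\partial f_l}{\partial x_j}\,\xi_l\ \in\ \frakm^{[q]}\qquad(0\le j\le n),
\]
where $\xi_l\colonequals\mathbf u/f_l$ (a polynomial, since $l\in I_+$). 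As each $k_l$ is a unit in $\FF$, a Cramer's-rule (multilinearity of the determinant) argument applied to any $|I_+|$ of these relations yields $J_{I_+}\,\xi_i\subseteq\frakm^{[q]}$, where $J_{I_+}$ is the ideal of size-$|I_+|$ minors of the submatrix of the Jacobian matrix on the columns indexed by $I_+$. Since moreover $f_i\xi_i=\mathbf u\in\frakm^{[q]}$, and $f_m^{\,p-k_m}\xi_i$ involves $f_m$ to the power $p$ for $m\neq i$ and hence lies in $\frakm^{[q]}$ by $f_m^ps^p\in\frakm^{[q]}$, I would conclude that $\mathfrak b\,\xi_i\subseteq\frakm^{[q]}$, where
\[
\mathfrak b\ \colonequals\ J_{I_+}+f_iS+\sum_{m\neq i}f_m^{\,p-k_m}S,
\]
while $\xi_i\notin\frakm^{[q]}$ by minimality of $\mathbf k$.

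The key point is that $\mathfrak b$ is $\frakm$-primary: a submatrix of a matrix of column rank $c$ has full column rank, so $V(J_{I_+})\subseteq V(J)$, and since each $p-k_m\ge1$,
\[
\sqrt{\mathfrak b}\ =\ \sqrt{\,J_{I_+}+(f_1,\dots,f_c)S\,}\ \supseteq\ \sqrt{\,J+(f_1,\dots,f_c)S\,}\ =\ \frakm,
\]
the last equality being the smoothness of $\Proj S/(f_1,\dots,f_c)S$. As $J_{I_+}$ is generated in the single degree $e-r$, with $e\colonequals\sum_{l\in I_+}d_l$ and $r\colonequals|I_+|$, the homogeneous-system-of-parameters computation in the proof of Lemma~\ref{lemma:hilb} would give $\frakm^M\subseteq\mathfrak b$ with
\[
M\ =\ d_i+\sum_{m\neq i}(p-k_m)d_m+(n+1-c)(e-r)-n.
\]
Feeding $\frakm^M\xi_i\subseteq\frakm^{[q]}$ into Lemma~\ref{lemma:colon} forces $\deg\xi_i\ge(n+1)q-n-M$; substituting $\deg\xi_i=\sum_{l\in I_+}k_ld_l-d_i-pd+(n+1)q$ and expanding $\sum_{m\neq i}(p-k_m)d_m=p(d-d_i)-\sum_{m\neq i}k_md_m$, the terms in $q$ and all dependence on $p$ beyond $k_i$ cancel, leaving
\[
k_id_i+(n+1-c)(e-r)\ \ge\ pd_i.
\]
Since $k_i\le p-t$ and $e-r=\sum_{l\in I_+}(d_l-1)\le d-c$, this yields $td_i\le(n+1-c)(d-c)$, contradicting the hypothesis $td_i\ge(n+1-c)(d-c)+1$. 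Hence no such $\eta$ exists, so $\tilde{F}_{[t]}$ is injective.

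I expect the main obstacle to be the construction in the second paragraph. Differentiating the naive product $(f_1\cdots f_c)^{p-t}s^p$ introduces complementary factors $\prod_{l\neq i}f_l$, and hence an ideal vanishing along all of $V(f_1,\dots,f_c)$, which is far from $\frakm$-primary; the remedy is twofold, namely to pass to a minimal multidegree $\mathbf k$ (so that the derivative relations involve a clean vector $(\xi_l)$ amenable to Cramer's rule), and then to adjoin the \emph{powers} $f_m^{\,p-k_m}$ rather than the $f_m$ themselves. The first makes $\mathfrak b$ turn out to be $\frakm$-primary; the second is precisely the bookkeeping that makes the $p$-dependence in the final degree count disappear and produces the stated bound. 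The remaining steps — the Cramer's-rule extraction of $J_{I_+}$, the fact that powers of a regular sequence form a regular sequence, and the prime-avoidance reduction to an infinite field — are routine, the latter two exactly as in the proof of Lemma~\ref{lemma:hilb}.
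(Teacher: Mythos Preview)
Your proof is correct and follows essentially the same route as the paper: reduce via Lemma~\ref{lemma:diagram} to the map $(f_1\cdots f_c)^{p-t}F$, choose a minimal multidegree $\mathbf k\le(p-t,\dots,p-t)$ with $f_1^{k_1}\cdots f_c^{k_c}s^p\in\frakm^{[q]}$, differentiate, apply a Cramer/adjoint argument to extract a Jacobian-type ideal, adjoin $f_i$ and the powers $f_m^{p-k_m}$ to get an $\frakm$-primary ideal, bound its socle degree as in Lemma~\ref{lemma:hilb}, and feed this into Lemma~\ref{lemma:colon} to obtain the degree inequality contradicting the hypothesis on $t$.

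The only substantive variation is that you work with the ideal $J_{I_+}$ of $r\times r$ minors of the submatrix on the active columns $I_+=\{l:k_l\ge1\}$, whereas the paper uses the full Jacobian ideal $J$ of $c\times c$ minors. Your observation $V(J_{I_+})\subseteq V(J)$ is correct (dependent columns in a submatrix force dependence in the full matrix), and it yields the intermediate bound $(n+1-c)(e-r)\ge(p-k_i)d_i$ with $e-r=\sum_{l\in I_+}(d_l-1)$, which is at least as sharp as the paper's $(n+1-c)(d-c)\ge(p-k_1)d_1$; you then use $e-r\le d-c$ to reach the same contradiction. This makes the handling of indices with $k_l=0$ slightly more transparent than in the paper (where the factored expression in the analogue of your display formally involves $f_l^{k_l-1}$ with $k_l=0$, though the accompanying coefficient $k_l$ vanishes), but the two arguments are otherwise interchangeable.
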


\begin{proof}
In view of Lemma~\ref{lemma:diagram}, it suffices to verify that for $t$ as in the theorem, the map
\[
(f_1\cdots f_c)^{p-t}F\colon {[H^{n+1}_\frakm(S)]}_{-d} \to {[H^{n+1}_\frakm(S)]}_{-dt}
\]
is injective when restricted to the annihilator of $(f_1,\dots,f_c)S$. Consider a nonzero element~$\eta$ of ${[H^{n+1}_\frakm(S)]}_{-d}$ that is annihilated by $(f_1,\dots,f_c)S$. Then
\[
\eta=\left[\frac{s}{(x_0\cdots x_n)^{q/p}}\right],
\]
where $q$ is a power of $p$, and $s\in S$ is homogeneous of degree $-d+(n+1)q/p$. The condition that $\eta$ is annihilated by each $f_i$ implies that $f_is\in\frakm^{[q/p]}$, and hence that
\begin{equation}
\label{equation:ci:1}
f_i^ps^p\in\frakm^{[q]}\quad\text{ for }\ 1\le i\le c.
\end{equation}
Suppose that
\[
(f_1\cdots f_c)^{p-t}F(\eta)=0,
\]
then
\[
(f_1\cdots f_c)^{p-t}s^p\ \in\ \frakm^{[q]}.
\]
Consider the partial order on $c$-tuples where $(k_1,\dots,k_c)\le (l_1,\dots,l_c)$ if $k_i\le l_i$ for each $i$; let $(k_1,\dots,k_c)$ be a minimal $c$-tuple with the property that $k_i\le p-t$ for each $i$, and
\begin{equation}
\label{equation:ci:2}
f_1^{k_1}\cdots f_c^{k_c}s^p\ \in\ \frakm^{[q]}.
\end{equation}
Applying the differential operators $\del/\del x_i$ to the above, we obtain
\begin{multline}
\label{equation:derivations}
k_1\frac{\del\!f_1}{\del x_i} f_1^{k_1-1}f_2^{k_2}\cdots f_c^{k_c}s^p\ + \dots +\
k_c\frac{\del\!f_c}{\del x_i} f_1^{k_1}\cdots f_{c-1}^{k_{c-1}}f_c^{k_c-1}s^p\\
=\ 
\Big(\frac{\del\!f_1}{\del x_i} k_1f_2\cdots f_c\ + \dots +\ \frac{\del\!f_c}{\del x_i} k_cf_1\cdots f_{c-1}\Big)
\ f_1^{k_1-1}\cdots f_c^{k_c-1}s^p\ \in\ \frakm^{[q]}
\end{multline}
for each $i$ with $0\le i\le n$. The ideal generated by the entries of the product matrix
\[
\begin{pmatrix}
\frac{\del\!f_1}{\del x_0} & \dots & \frac{\del\!f_c}{\del x_0}\\
\vdots & & \vdots\\
\frac{\del\!f_1}{\del x_n} & \dots & \frac{\del\!f_c}{\del x_n}\\
\end{pmatrix}
\begin{pmatrix}
k_1f_2\cdots f_c\\
\vdots\\
k_cf_1\cdots f_{c-1}
\end{pmatrix}
\]
contains the ideal
\[
J(k_1f_2\cdots f_c,\ \dots,\ k_cf_1\cdots f_{c-1})
\]
where $J$ is Jacobian ideal: selecting any $c$ rows of the matrix $(\del\!f_j/\del x_i)$, one may multiply on the left by the classical adjoint of the resulting $c\times c$ submatrix. Hence~\eqref{equation:derivations} gives
\[
J(k_1f_2\cdots f_c,\ \dots,\ k_cf_1\cdots f_{c-1})\ f_1^{k_1-1}\cdots f_c^{k_c-1}s^p\ \subseteq\ \frakm^{[q]}.
\]
Since $s^p\notin\frakm^{[q]}$ as $F(\eta)\neq0$, some $k_i$ must be nonzero in~\eqref{equation:ci:2}. After relabelling the elements $f_i$, assume without loss of generality that $k_1$ is nonzero. Then
\[
J f_1^{k_1-1}f_2^{k_2}\cdots f_c^{k_c}s^p\ \subseteq\ \frakm^{[q]},
\]
and using~\eqref{equation:ci:1} and~\eqref{equation:ci:2} we can moreover conclude that
\[
\big(J + (f_1,\ f_2^{p-k_2},\ \dots,\ f_c^{p-k_c})\big)\ f_1^{k_1-1}f_2^{k_2}\cdots f_c^{k_c}s^p\ \subseteq\ \frakm^{[q]}.
\]
The ideal $J+(f_1,f_2,\dots,f_c)S$ is $\frakm$-primary by the smoothness hypothesis, hence so is the ideal $J+(f_1,f_2^{p-k_2},\dots,f_c^{p-k_c})S$. We claim that
\[
\frakm^N\ \subseteq\ J+(f_1,f_2^{p-k_2},\dots,f_c^{p-k_c})S,
\]
where
\[
N\colonequals d_1+\Big(\sum_{i=2}^c d_i(p-k_i)\Big)+(n+1-c)(d-c)-n.
\]
The proof of the claim follows that of Lemma~\ref{lemma:hilb}: the ideal $J$ is generated in degree $d-c$, so after enlarging the field $\FF$, the regular sequence $f_1,f_2^{p-k_2},\dots,f_c^{p-k_c}$ can be extended to a homogeneous system of parameters for $S$ by choosing~$n+1-c$ elements from $J$, each of degree $d-c$. It follows that
\[
\frakm^N f_1^{k_1-1}f_2^{k_2}\cdots f_c^{k_c}s^p\ \subseteq\ \frakm^{[q]},
\]
and Lemma~\ref{lemma:colon} gives
\[
f_1^{k_1-1}f_2^{k_2}\cdots f_c^{k_c}s^p\ \in\ \frakm^{[q]}:_S\frakm^N\ =\ \frakm^{[q]}+\frakm^{(n+1)q-n-N}.
\]
The minimality assumption on $(k_1,\dots,k_c)$ in~\eqref{equation:ci:2} implies that
\[
f_1^{k_1-1}f_2^{k_2}\cdots f_c^{k_c}s^p\ \notin\ \frakm^{[q]},
\]
and hence that
\[
f_1^{k_1-1}f_2^{k_2}\cdots f_c^{k_c}s^p\ \in\ \frakm^{(n+1)q-n-N}.
\]
Examining degrees, one has
\[
\deg(f_1^{k_1-1}f_2^{k_2}\cdots f_c^{k_c}s^p)\ \ge\ (n+1)q-n-N,
\]
i.e.,
\[
(k_1-1)d_1+\sum_{i=2}^ck_id_i-pd+(n+1)q\ \ge\ (n+1)q-n-N,
\]
which simplifies to
\[
(n+1-c)(d-c)\ \ge\ (p-k_1)d_1.
\]
But $p-k_1\ge t$, so $(n+1-c)(d-c)\ge td_1$, which contradicts the assumption on $t$.
\end{proof}

Using Theorem~\ref{theorem:ci}, we obtain:

\begin{proof}[Proof of Theorem~\ref{theorem:frobenius:ci:intro}]
Let $X=\Proj S/(f_1,\dots,f_c)S$ for $f_i$ as in the previous theorem, and choose~$t_0$ such that 
\[
t_0d_i\ \ge\ (n+1-c)(d-c)+1
\]
for each $i$. Then for $t\ge c(t_0-1)+1$ and $p\ge t$, one has
\[
\CD
S/(f_1,\dots,f_c) @>>> S/(f_1^p,\dots,f_c^p) @>>> S/(f_1,\dots,f_c)^t @>>> S/(f_1^{t_0},\dots,f_c^{t_0}),
\endCD
\]
with the first map induced by Frobenius, and the others being canonical surjections. But
\[
\tilde{F}_{[t_0]} \colon {[H^{n+1-c}_\frakm(S/(f_1,\dots,f_c)S)]}_0 \to {[H^{n+1-c}_\frakm(S/(f_1^{t_0},\dots,f_c^{t_0})S)]}_0
\]
is injective by Theorem~\ref{theorem:ci}; it factors through
\[
\tilde{F}_t\colon {[H^{n+1-c}_\frakm(S/(f_1,\dots,f_c)S)]}_0 \to {[H^{n+1-c}_\frakm(S/(f_1,\dots,f_c)^tS)]}_0,
\]
which is therefore injective.
\end{proof}

\section{Elliptic curves}

It remains to settle Theorem~\ref{theorem:main}.\ref{theorem:main:elliptic}, i.e., to prove:

\begin{theorem}
\label{theorem:elliptic}
Let $X$ be an elliptic curve in $\PP^n$, over a field of characteristic $p>0$. Then the Frobenius map
\[
\tilde{F}_2\colon H^1(X,\calO_X) \to H^1(X_2,\calO_{X_2})
\]
is injective.
\end{theorem}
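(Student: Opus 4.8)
The plan is to combine a short sequence of reductions with one new computation. Throughout, let $I\subseteq S\colonequals\FF[x_0,\dots,x_n]$ be the saturated homogeneous ideal of $X\subseteq\PP^n$, identify $H^1(X,\calO_X)=[H^2_\frakm(S/I)]_0$ and $H^1(X_2,\calO_{X_2})=[H^2_\frakm(S/I^2)]_0$, and note that $\tilde F_2$ is the map induced by $S/I\to S/I^2$, $\bar a\mapsto\overline{a^p}$; this is well defined since $I^{[p]}\subseteq I^p\subseteq I^2$ for $p\ge 2$, and it agrees with the composite from the Introduction. As enlarging $\FF$ changes neither the statement nor these cohomology groups, I would first assume $\FF$ algebraically closed.

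First, I would reduce to the supersingular case. Postcomposing $\tilde F_2$ with the surjection $H^1(X_2,\calO_{X_2})\to H^1(X,\calO_X)$ induced by $S/I^2\to S/I$ recovers the Frobenius action on the one-dimensional space $H^1(X,\calO_X)$, which is injective exactly when $X$ is ordinary. So assume $X$ supersingular; then that composite is zero, so the image of $\tilde F_2$ lies in the kernel of the surjection, that is, in the image of $H^1(X,I/I^2)\to H^1(X_2,\calO_{X_2})$ coming from $0\to I/I^2\to\calO_{X_2}\to\calO_X\to 0$. This last map is injective because constants lift to $H^0(X_2,\calO_{X_2})$, so it now suffices to show the induced map $\bar F_2\colon H^1(X,\calO_X)\to H^1(X,I/I^2)$ into the first cohomology of the conormal sheaf is nonzero.

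Second, I would reduce to the case of an elliptic normal curve. If $n=2$, then $X$ is a smooth plane cubic and the assertion is Theorem~\ref{theorem:main}.\ref{theorem:main:hypersurface} with $t=2=\dim X+1$ (its hypothesis $p\ge n=2$ being automatic), equivalently \cite[Theorem~4.1]{BS}; so assume $n\ge3$. If $X\subseteq\PP^n$ is not linearly normal it is a linear projection of a linearly normal $\tilde X\subseteq\PP^{\tilde n}$ with $\tilde n>n$; the projection restricts to an isomorphism on $\tilde X$ and induces an inclusion $\Omega^1_{\PP^n}|_X\hookrightarrow\Omega^1_{\PP^{\tilde n}}|_{\tilde X}$, hence, through the conormal sequences, an inclusion $I/I^2\hookrightarrow\tilde I/\tilde I^2$ of conormal sheaves. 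This inclusion is the ideal part of a morphism of first thickenings $\tilde X_2\to X_2$ over $X$ that is compatible with the two Frobenius maps $\bar a\mapsto\overline{a^p}$ to $X$; consequently $\bar F_2$ for $\tilde X$ factors as $\bar F_2$ for $X$ followed by a linear map, so the non-vanishing of $\bar F_2$ for $\tilde X$ forces that for $X$. Thus one may assume $X$ is an elliptic normal curve of degree $n+1$ in $\PP^n$ with $n\ge3$; in particular $S/I$ is arithmetically Gorenstein with $a$-invariant $0$, so $H^2_\frakm(S/I)\cong\Hom_\FF(S/I,\FF)$ as graded $S/I$-modules.

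The remaining assertion---that $\bar F_2\neq 0$ for an elliptic normal curve---is the main obstacle. It genuinely requires a new argument: elliptic normal curves of degree $\ge 5$ are not complete intersections, and even the degree-$4$ curve in $\PP^3$ (a complete intersection of two quadrics) meets the hypothesis of Theorem~\ref{theorem:ci} only at $t=3$, the numerical inequality in that proof being an \emph{equality} at $t=2$. I would attack it by adapting the differential-operator and colon-ideal machinery behind the proofs of Theorem~\ref{theorem:main}.\ref{theorem:main:hypersurface} and Theorem~\ref{theorem:ci}: express a hypothetical nonzero kernel element of $\tilde F_2$ as a \v Cech class $\eta=\big[\,s/(x_0\cdots x_n)^{q/p}\,\big]$ annihilated by $I$, so that $g^ps^p\in\frakm^{[q]}$ for the quadratic generators $g$ of $I$; pass to a minimal-exponent relation $f_1^{k_1}\cdots f_m^{k_m}s^p\in\frakm^{[q]}$ among these generators; apply the $S^p$-linear derivations $\del/\del x_i$ to bring in the Jacobian ideal $J$ of $I$; and combine the $\frakm$-primarity of $J+I$ (from smoothness) with a Hilbert-function bound in the style of Lemma~\ref{lemma:hilb} and the colon computation of Lemma~\ref{lemma:colon} to reach a degree inequality. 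I expect the decisive difficulty to be that, $I$ now having many generators and many Koszul relations, this inequality comes out exactly at the boundary (as the degree-$4$ case already signals), so one must extract the last unit of slack from the genus being exactly $1$: concretely, from $\omega_X\cong\calO_X$, equivalently from the $a$-invariant of $S/I$ being $0$, which pins down the socle degree of the relevant Artinian reduction. A variant would be to dualize via graded local duality together with Serre duality, identifying the dual of $\bar F_2$ with a Cartier-operator--type functional on $[\Hom_{S/I}(I/I^2,S/I)]_0=H^0(X,N_{X/\PP^n})$ whose non-vanishing must then be established; in either approach it is the Gorenstein symmetry of $S/I$ that makes the estimate strict.
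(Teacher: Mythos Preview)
Your reductions are sound and partly parallel the paper's own: passing to an algebraically closed field, disposing of the ordinary case, and using a map of first thickenings to ``descend'' injectivity are all in the paper (phrased there as \emph{goodness} of a pair $(X\subset P)$ and its descent along $P'\to P$). However, the heart of your proposal---establishing $\bar F_2\neq 0$ for an elliptic normal curve by pushing the derivation/colon-ideal method---is not a proof but a hope. You yourself note that for the $(2,2)$ complete intersection in $\PP^3$ the final inequality of the Theorem~\ref{theorem:ci} argument degenerates to an equality at $t=2$, and for higher elliptic normal curves (which are not complete intersections) the situation is worse, not better: there are many quadratic generators, many linear syzygies among them, and the minimal-exponent relation you propose does not have an evident Jacobian consequence strong enough to beat the degree count. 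Invoking the $a$-invariant being $0$ or the Gorenstein symmetry of $S/I$ does not supply the missing unit of slack without a concrete mechanism, and none is given. As written, the proposal stops exactly where the difficulty begins.

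The paper takes a completely different route that avoids any colon-ideal estimate. The key external input is Igusa's theorem that the supersingular locus in $\calM_{1,1}$ is reduced: this says precisely that the pair $(X\subset 2\calC_x)$, with $\calC\to\calM_{1,1}$ the universal curve and $x=[X]$, is good. The paper then shows the forgetful map $\pi\colon\calH\to\calM_{1,1}$ from the relevant Hilbert scheme is smooth at the point $y$ corresponding to $X\subset\PP^n$, by checking $H^1(X,T_{\PP^n}|_X)=H^1(X,(I_X/I_X^2)^\vee)=0$ via the Euler sequence and Riemann--Roch. Smoothness lets one lift a tangent vector at $x$ to one at $y$, producing a section of $(X\subset 2\calX_y)\to(X\subset 2\calC_x)$; goodness then transfers to $(X\subset 2\calX_y)$ and descends further along $2\calX_y\to\calX\to\PP^n$ to $(X\subset\PP^n)$. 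In short, the missing idea is not a sharper inequality but a geometric one: the injectivity for \emph{every} projective embedding of $X$ is deduced from injectivity for the single canonical first-order thickening supplied by moduli, and the latter is Igusa's theorem.
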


\begin{proof}
The statement is insensitive to replacing the ground field $\FF$ by its algebraic closure, so we assume $\FF$ is algebraically closed. For the proof, it will be convenient to generalize the construction of $\tilde{F}_2$ slightly by allowing arbitrary ambient spaces as follows:

Given an $\FF$-scheme $P$, and a closed immersion $i\colon X\to P$, write $2i(X) \subset P$ for the square-zero thickening defined by $I_X^2$; the Frobenius on $P$ induces a map $2i(X) \to X$. We call the closed immersion $(X \subset P)$ \emph{good} if the pullback
\[
H^1(X,\calO_X) \to H^1(2X,\calO_{2i(X)}),
\]
induced by the Frobenius on $P$, is injective. Thus, the identity map $X \to X$ is good exactly when the elliptic curve $X$ is ordinary. The theorem amounts to showing that the given closed immersion~$X\subset \PP^n$ is good.

First, observe that goodness descends: given closed immersions $(X \subset P)$ and $(X \subset P')$ with a map $P' \to P$ compatible with the inclusion of $X$, if $(X \subset P')$ is good, so is $(X \subset P)$.

Next, we recall a good pair coming from moduli spaces. Let $f\colon\calC\to\calM_{1,1}$ be the universal curve over the moduli space of elliptic curves. After choosing an $\FF$-point on $X$, the elliptic curve $X$ gets identified as a fibre $\calC_x$ of $f$ at an $\FF$-point $x\in\calM_{1,1}$ classifying the elliptic curve $X$. Set $2\calC_x\colonequals V(I_{\calC_x}^2) \subset \calC$. As $\calM_{1,1}$ is a smooth (Deligne-Mumford) curve, the closed immersion $(X = \calC_x \subset 2 \calC_x)$ is a square-zero thickening whose ideal may be identified with $t_x^\vee \otimes_\FF \calO_X$, where $t_x$ is the tangent space to $\calM_{1,1}$ at $x$ (whence $t_x = H^1(X,T_X)$ by deformation theory). Critically, Igusa's theorem on the reducedness of the supersingular locus~\cite{Igusa} implies that $(X \subset 2\calC_x)$ is good. 

We now prove that $(X \subset \PP^n)$ is good. Let $\calH$ be a suitable Hilbert scheme of elliptic curves in $\PP^n$, and let $g\colon\calX\to\calH$ be the universal elliptic curve, so we have a tautological closed immersion $i_\calH\colon\calX\to\PP^n_\calH$ as well as a distinguished point $y\in\calH(\FF)$ corresponding to $X$ such that the fibre of $i_\calH$ over $y$ identifies with $X \subset \PP^n$. Set $2\calX_y\colonequals V(I_{\calX_y}^2) \subset \calX$. Forgetting the embedding gives a map $\pi\colon\calH\to\calM_{1,1}$ with $\pi(y)=x$; we shall prove this map is smooth at $y$. Granting the smoothness, let us first complete the proof of the theorem. We have a fibre square
\[
\CD
\calX @>>> \calC\\
@VVgV @VVfV\\
\calH @>\pi>> \calM_{1,1},
\endCD
\]
with $\pi$ being smooth at $y\in\calH$, and the vertical maps being relative smooth curves. As tangent vectors can be lifted along smooth maps, it follows that the map 
\[
(X=\calX_y \subset 2\calX_y) \to (X=\calC_x \subset 2\calC_x)
\]
of square-zero thickenings admits a section, so the goodness of one is equivalent to the goodness of the other by descent of goodness, whence $(X=\calX_y\subset 2\calX_y)$ is good by the last paragraph. But we have maps
\[
(X=\calX_y \subset 2\calX_y) \to (X\subset\calX) \to (X\subset\PP^n)
\]
of closed immersions, so the descent of goodness implies that $(X \subset\PP^n)$ is good. 

It remains to prove the smoothness of the map $\pi\colon\calH \to \calM_{1,1}$ from the Hilbert scheme to the moduli space at $y$. As the target is smooth, it suffices to prove the source is smooth and that this map is surjective on tangent spaces. If we write $I_X \subset \calO_{\PP^n}$ for the ideal sheaf of $X$, then the obstruction to smoothness of $\calH$ at $y$ is given by
\[
\Ext^1_X(I_X/I_X^2,\calO_X) = H^1(X, (I_X/I_X^2)^\vee),
\]
while the tangent map $t_{\pi,y}$ identifies with the map 
\[
\Hom_X(I_X/I_X^2, \calO_X) \to H^1(X, T_X)
\]
arising from the standard exact sequence
\[
\CD
0 @>>> T_X @>>> T_{\PP^n}|_X @>>> (I_X/I_X^2)^\vee @>>> 0
\endCD
\]
as the boundary map on global sections. Thus, it is enough to check that the second and third terms in the sequence above have no $H^1$. Now $T_{\PP^n}$ is a quotient of $\calO_{\PP^n}(1)^{n+1}$ by the Euler sequence, so the same is true on restriction to $X$. As $H^{>1}(X,-) = 0$, the functor~$H^1(X,-)$ is right exact, so the vanishing of $H^1(X,\calO_{\PP^n}(1)|_X)$ by Riemann-Roch implies both the desired vanishings.
\end{proof}


\end{document}